\documentclass[reqno,11pt]{amsart}  
\usepackage[utf8]{inputenc}

\usepackage[colorinlistoftodos]{todonotes}
\usepackage{pgfplots} 
\usepackage{subfigure}
\usepackage{amsmath}
\usepackage{amssymb} 
\usepackage{mathtools}   % loads »amsmath«
\usepackage{graphicx}
\usepackage{graphics}  
\usepackage{color}
\usepackage{verbatim} 
\usepackage{bbold}
\usepackage[normalem]{ulem} 
\usepackage[mathscr]{eucal}
\usepackage{upgreek}
\usepackage{enumerate} 
\usepackage{cite}
\usepackage{amsmath}
\usepackage{amsfonts}
\usepackage{amssymb}
\usepackage{bbm}
\usepackage{cancel}
\usepackage{graphicx}
\usepackage{multicol}
\usepackage[utf8]{inputenc}
\usepackage{framed}
\usepackage{setspace}
\usepackage{amsthm}
\usepackage{hyperref}
\usepackage{caption}
\usepackage{subcaption}
\usepackage{bbm}
\usepackage{graphicx}
\usepackage{tikz}
\usepackage[titletoc]{appendix}
\numberwithin{equation}{section}  

\usepackage[refpage,noprefix]{nomencl}
\usepackage{nomencl} 

\newcommand{\ssup}[1] {{\scriptscriptstyle{{#1}}}}
\newcommand{\gk}[1]{\left\{#1\right\}}

\newcommand{\ek}[1]{\left[#1\right]}
\newcommand{\rk}[1]{\left(#1\right)}
 \newcommand{\floor}[1]{\left\lfloor #1 \right\rfloor}
\newcommand{\ceil}[1]{\left\lceil #1 \right\rceil}
\newcommand{\hk}[1]{^{\ssup{(#1)}}}

\newcommand{\I}   {{\mathcal I }}

\newcommand{\Rcal}   {{\mathcal R }}

\newcommand{\Z}     {\mathbb{Z}} 
\newcommand{\N}     {\mathbb{N}} 
\renewcommand{\P}   {\mathbb{P}} 
 
\newcommand{\E}     {\mathbb{E}}

\newcommand{\up}{\mathrm{up}}

 \newcommand{\ex}{{\rm e}} 
 \renewcommand{\d}{{\rm d}} 
 
\renewcommand{\L}{\Lambda}
\renewcommand{\l}{\lambda}
\newcommand{\e}{\varepsilon}
\newcommand{\1}{\!\mathbbm{1}\!}

\renewcommand{\L}{\Lambda}

\renewcommand{\P}{\mathbb{P}}

\newtheorem{theorem}{Theorem}
\newtheorem{lemma}{Lemma}[section]

\newcommand{\eps}{\varepsilon}

\newcommand{\Range}{\Rcal}
\newcommand{\Centrange}{\overline{\Range}}

\newcommand{\ind}[1]{{\mathbf 1}{\{#1\}}}

\title{On large deviations for the range of a two-dimensional random walk}
\author{Serguei Popov$^{1}$, Quirin Vogel$^{2}$}
\date{\today}

\begin{document}

\maketitle
\begin{abstract}
In this note, we compute the probability that a two-dimensional symmetric random walk visits more vertices than expected, for deviations on scales between the mean behavior and linear growth. 
\end{abstract}

\centerline{\textit{$^1$ Centro de Matem\'atica, University of 
Porto, Porto, Portugal}}
\centerline{\textsc{
serguei.popov@fc.up.pt}}
\centerline{\textit{$^2$ University of Klagenfurt,
Department of Statistics, Klagenfurt, Austria}}
\centerline{\textsc{quirin.vogel@aau.at}}
\bigskip

\bigskip\noindent 
{\it MSC 2020:}  60G50, 60G17, 60F10

\medskip\noindent
{\it Keywords and phrases:} %random walk, 
range, large deviations, planar random walk, upper tail deviations
\section{Introduction and results}
Let $(X_i)_{i\geq 1}$ be i.i.d. symmetric random vectors in $\Z^2$ with mean zero and finite second moment; we also assume that they are not supported on a one-dimensional subspace. 
Let us define the random walk~$(S_n)_{n\geq 1}$ as $S_n=\sum_{i=1}^n X_i$ for $n\ge 1$. For this two-dimensional random walk, denote the range and its deviation by
\begin{equation}
\Range_n=\sum_{x\in\Z^2}\1\gk{H_x\le n}\qquad\textnormal{and}\qquad \Centrange_n=\Range_n-\E\ek{\Range_n}\, ,
\end{equation}
where $H_x=\inf\gk{n\ge 0\colon S_n=x}$ is the entrance time to~$x$. Denote the covariance matrix of~$X_1$ by~$\Gamma$. Then we have, as $n\to\infty$,
\begin{equation}\label{eq:exp_expans}
    \E\ek{\Range_n}=\frac{2\pi n\sqrt{\det{\Gamma}}}{\log n}+\frac{2\pi n\sqrt{\det{\Gamma}}}{\log^2n}\rk{1+o(1)}\, ,
\end{equation}
see \cite[Theorem 6.9]{le1991range} (we write $\log^\alpha n$ for $\rk{\log(n)}^\alpha$, for better legibility). This result goes back to \cite{dvoretzky1951some}, where the first order was obtained for the simple random walk, where
$ 2\pi\sqrt{\det{\Gamma}}=\pi$.
For convenience, we henceforth denote $e_1=2\pi\sqrt{\det{\Gamma}}$ and $r_n=\E\ek{\Range_n}$.

A nonstandard central limit theorem was proved in \cite{le1986proprietes} for the random walk range in two dimensions, see also \cite[Theorem~5.4.3]{chen2010random}, showing that
\begin{equation}\label{eq:CLT}
    \frac{\log^2 n}{n}\Centrange_n\xrightarrow[n\to\infty]{(\mathrm{d})}-(2\pi)^2\sqrt{\det\Gamma }\,\gamma_1\, ,
\end{equation}
where $\gamma_1$ is the renormalized self-intersection local time for the Brownian motion in two dimensions up to time 1.

We now formulate our main result:
\begin{theorem}
\label{t_main}
There exist $C_{\up}, C_{\mathrm{low}}>0$ such that for all $\theta_n\ge 1$ with $\theta_nr_n\le n$, we obtain that for all $n$
\begin{equation}
\label{eq:main}
    \exp\rk{-C_{\mathrm{low}}n^{1-\theta_n^{-1}}}\le \P\rk{\Range_n\ge \theta_nr_n}\le \exp\rk{-C_{\mathrm{up}}n^{1-\theta_n^{-1}}}\, .
\end{equation}
\end{theorem}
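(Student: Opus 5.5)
Both bounds come from cutting the path into blocks. The heuristic is that a walk of length $m$ has range about $e_1 m/\log m$, so to reach range $\theta_n r_n\approx \theta_n e_1 n/\log n$ it suffices to split time into $k$ blocks of length $m=n/k$ with $\log m\approx \theta_n^{-1}\log n$. Then $k\approx n^{1-\theta_n^{-1}}$, and each block must be forced to behave in a way that costs a constant factor in probability.

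\textbf{Lower bound.} Set $m=\lfloor n^{1/\theta_n}/c\rfloor$ and $k=\lceil n/m\rceil\asymp n^{1-\theta_n^{-1}}$. Force the walk, in each block of length $m$, to move a distance of order $\sqrt m$ in a fixed direction, say to stay in a tube and end near $S_{(j-1)m}+\sqrt m\,e$. By the invariance principle, each block does this with probability bounded below by a constant $p>0$, and on this event its range is at least $c'm/\log m$ with probability bounded below. This uses the law of large numbers for $\Range_m/r_m$ together with \eqref{eq:exp_expans}.

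Successive blocks occupy essentially disjoint regions (slabs along the direction $e$). A cleaner alternative is to require only that each block's range is at least $c'm/\log m$, and to control overlaps by making consecutive block tubes disjoint, for instance by moving by $2\sqrt m$ per block inside a tube of width $\sqrt m$. The total range is then at least $k c' m/\log m \ge c'n/(\log n^{1/\theta_n}-\log c)\approx c'\theta_n n/\log n$. Choosing $c$ (and the step lengths) properly makes this at least $\theta_n r_n$. By the Markov property the probability is at least $p^k=\exp(-C_{\mathrm{low}}n^{1-\theta_n^{-1}})$. Near the linear regime, $\theta_n r_n\approx n$, one instead needs almost no self-intersections; forcing a ballistic path with all steps distinct and fresh still costs only $\ex^{-Cn}$, consistent with the bound since $n^{1-\theta_n^{-1}}\asymp n$ there. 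The constants must be tuned uniformly in $\theta_n$.

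\textbf{Upper bound.} Again take blocks of length $m=n^{\delta/\theta_n}$ with a small $\delta<1$, so $k=n^{1-\delta/\theta_n}$ (up to a constant in the exponent, adjust $\delta$ afterwards). Subadditivity gives $\Range_n\le\sum_{j=1}^k \Range^{(j)}_m$, where the $\Range^{(j)}_m$ are i.i.d. copies of $\Range_m$. We have $\E\Range_m=e_1m/\log m\,(1+o(1))=\frac{\theta_n}{\delta}\,e_1\frac{n}{k\log n}(1+o(1))$, so the sum has mean about $(\theta_n/\delta)\,r_n$. That is the wrong direction when $\delta<1$.

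We therefore take $\delta>1$: $m=n^{\delta/\theta_n}$ with $\delta=2$, say. Then the sum of block ranges has mean $\frac{\theta_n}{2} r_n$, and we need $\P\bigl(\sum_j \Range^{(j)}_m\ge 2\cdot\text{mean}\bigr)$. By Chernoff this is at most $\exp(-ck)$, provided $\Range_m/\E\Range_m$ has a uniform exponential moment, i.e.
\[
\E\,\ex^{\lambda \Range_m\log m/m}\le C
\]
for small $\lambda$. This follows from a Kesten–Spitzer–Whitman type bound: split $[0,m]$ into $\ell$ pieces, each of whose ranges is dominated by its length; equivalently, use the known exponential integrability of $\gamma_1$ through \eqref{eq:CLT}, made uniform in $m$. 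Here $k=n^{1-2/\theta_n}$, which is smaller than the target $n^{1-\theta_n^{-1}}$. Fixing this requires replacing $2$ by $1+\eps$ and making the deviation threshold $\tfrac{1}{1+\eps}$ relative, so that Chernoff gives $\exp(-c_\eps k)$ with $k=n^{1-(1+\eps)/\theta_n}$. Since $n^{\eps/\theta_n}$ is not bounded in general, we choose $\eps=\theta_n/\log n\cdot O(1)$, i.e. $m=C n^{1/\theta_n}$ with a large constant $C$. Then $\E\sum_j\Range^{(j)}_m=\theta_n r_n\,(1-\theta_n\log C/\log n)$, a relative gap $\eta=\theta_n\log C/\log n$.

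\textbf{Main obstacle.} The hard step is the exponential-moment/Chernoff estimate at this small relative deviation $\eta$ with $k\asymp n^{1-1/\theta_n}/C$ blocks. Chernoff gives roughly $\exp(-k\eta^2/\mathrm{Var})$, and the variance of $\Range_m\log m/m$ is of order $1/\log^2 m$ by \eqref{eq:CLT}. So the exponent is about $k\,\eta^2\log^2 m\approx k(\log C)^2$, which is fine. To make this rigorous I need a uniform sub-Gaussian/exponential bound for $\frac{\log^2 m}{m}\Centrange_m$ on the upper side, valid at deviations of order $1$ on that scale. I would prove it by dyadic decomposition: $\Range_m\le\Range^{(1)}_{m/2}+\Range^{(2)}_{m/2}$ minus an intersection term. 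Iterating this as in Le Gall's decomposition, together with exponential moments of intersection local times, gives the bound.
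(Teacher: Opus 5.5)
\paragraph{Upper bound.} Your upper bound follows the paper's argument. You use blocks of length $Cn^{1/\theta_n}$, subadditivity, and exponential Chebyshev with a uniform-in-$m$ exponential moment of $\frac{\log^2 m}{m}\Centrange_m$. The paper quotes that moment bound from Bass--Chen--Rosen instead of reproving it. Two small points:
\begin{itemize}
\item The paper takes $C=\ex^{\beta+1}$; the extra factor $\ex$ compensates the second-order term of \eqref{eq:exp_expans}, which your relative gap $\theta_n\log C/\log n$ drops.
\item The regime $\theta_n\ge\eps_0\log n$ needs separate treatment (the paper uses Hamana--Kesten), because there $m$ stays bounded and the expansions are unavailable.
\end{itemize}

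\paragraph{Lower bound: wrong precision.} The lower bound has a genuine gap: it is carried out at the wrong precision. Shrinking $m$ to $\ex^{-\beta}n^{1/\theta_n}$ (your $c=\ex^{\beta}$) only buys the margin
$\theta_n r_n\le k\,r_m\bigl(1-\frac{\beta}{\log m}(1+o(1))\bigr)$.
That is about $\beta r_m/\log m\asymp \beta m/\log^2 m$ per block, and even this needs the second-order term of \eqref{eq:exp_expans}.

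A law-of-large-numbers bound $\Range^{(j)}\ge c'm/\log m$ with fixed $c'<e_1$ gives total range $c'n/\log m$. For bounded $\theta_n$ this exceeds $\theta_n r_n$ only if $m\le n^{c'/(e_1\theta_n)}$, which forces $k\ge n^{1-c'/(e_1\theta_n)}\gg n^{1-\theta_n^{-1}}$ blocks. No choice of the constant $c$ repairs this.

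What you actually need is $\Range^{(j)}\ge r_m\bigl(1-\frac{\beta/2}{\log m}\bigr)$ with probability close to $1$. This is a statement on the fluctuation scale $m/\log^2 m$, and it comes from the CLT \eqref{eq:CLT}, not the LLN.

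\paragraph{Lower bound: overlaps of consecutive blocks.} Consecutive blocks cannot be made disjoint, contrary to your ``disjoint tubes'' claim. Block $j+1$ starts one step from where block $j$ ends. Two independent planar walks started one step apart avoid each other up to time $m$ with probability tending to $0$, so disjointness cannot be imposed at constant cost per block.

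A slab geometry, like the paper's events $B_k$, separates only non-adjacent blocks. The overlap $\I^{(j,j+1)}$ of adjacent ranges has mean $2r_m-r_{2m}\sim 2\log 2\, r_m/\log m$, which is exactly the order of the available margin. You therefore need:
\begin{itemize}
\item events $\I^{(j,j+1)}\le \frac{\beta/6}{\log m}\,r_m$, which have probability close to $1$ by Markov's inequality once $\beta$ is large;
\item the identity $\Range_n=\sum_j\Range^{(j)}-\sum_j\I^{(j,j+1)}$, valid on the slab event.
\end{itemize}

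These overlap events couple neighbouring blocks, so the product bound $p^k$ via the Markov property no longer applies. Conditionally on the slab events, the indicators of the good events form a $1$-dependent sequence with marginals exceeding $3/4$. This is why both the CLT estimate and the overlap estimate must hold with probability close to $1$: they have to survive intersection with slab events of probability only $\ge h_0$. The paper then concludes with the Liggett--Schonmann--Stacey domination theorem, obtaining the factor $(1/4)^M$.
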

Note that in the special case where $\theta_n=(1+\delta)$ for $\delta>0$, we obtain large deviation estimates on the scale of the mean, with the factor $n^{1-\theta_n^{-1}}=n^{\frac{\delta}{\delta+1}}$ in the exponent. Large deviations at the linear scale (i.e., $\theta_n=\delta\log n$) were first studied in~\cite{hamana2001large} for $d\ge 2$, with asymptotic bounds. In \cite{bass2009moderate}, the authors studied the deviations of $\Centrange_n$ on scales much smaller than $r_n$ for $d=2$, also without matching constants.

We remark that large deviations in the downwards direction (i.e., the probability that the range is less than its expectation) were investigated in \cite{liu2021large} for $d=2$ on the scale of the mean and in~\cite{donsker1979number} for smaller scales.

In the remaining part of the paper, we prove Theorem~\ref{t_main}: first the upper bound in~\eqref{eq:main} (Section~\ref{s_upper_proof}), then the lower bound (Section~\ref{s_lower_proof}).

\section{Upper bound}
\label{s_upper_proof}
Let us prove the following (slightly more general) result:
\begin{lemma}
There exists $C_\up>0$ such that for all $n$ and all $\theta_n\ge 1$ with $r_n\theta_n\le n$, we have that
\begin{equation}
    \P\rk{\Range_n\ge \theta_nr_n}\le \exp\rk{-C_\up n^{1-\theta_n^{-1}}}\, ,
\end{equation}
If $\theta_n=o\rk{\log n}$, then we can choose $C_\up=\widetilde{\Lambda}(1+o(1))$
with
    \begin{equation}
    \label{def_tilLambda}
    \widetilde{\Lambda}=\ex^{-(\L'(b_0)-1)}b_0\, ,
    \end{equation}
    where $\Lambda$ is the logarithmic moment generating function of $-\gamma_1$, with $\gamma_1$ the renormalized self-intersection local time in the unit interval of the Brownian motion and $b_0$ solves $\Lambda(b)=b\rk{\Lambda'(b)-1}$.
In particular, for every $\delta>0$
    \begin{equation}
    \label{use_tilLambda}
    \limsup_{n\to\infty}\frac{1}{n^{\frac{\delta}{\delta+1}}}\log\P\rk{\Range_n\ge (1+\delta)r_n}\le -\widetilde{\Lambda}\, .
    \end{equation}
    \end{lemma}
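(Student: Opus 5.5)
The plan is to split the walk into blocks and use subadditivity of the range together with an exponential moment bound for the range of a single block. Fix a block length $m=m_n\le n$ and let $k=\lceil n/m\rceil$. Write $\Range_n\le\sum_{j=1}^k\Range^{(j)}_m$, where $\Range^{(j)}_m$ is the range of the $j$-th block $(S_{(j-1)m+i}-S_{(j-1)m})_{0\le i\le m}$. These block ranges are i.i.d. copies of $\Range_m$. The exponential Chebyshev inequality then gives, for every $\lambda>0$,
\begin{equation*}
\P\rk{\Range_n\ge\theta_nr_n}\le \exp\rk{-\lambda\theta_nr_n}\,\E\ek{\ex^{\lambda\Range_m}}^k\, .
\end{equation*}
The point is to choose $m$ so that $r_m\approx \theta_n r_n/k$, i.e. each block has its typical range and the excess comes from having many blocks. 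Since $r_n\approx e_1 n/\log n$, this means $\log m\approx \theta_n^{-1}\log n$, so $m\approx n^{1/\theta_n}$ and $k\approx n^{1-\theta_n^{-1}}$. This $k$ is exactly the exponent in the statement. The standing assumption $\theta_nr_n\le n$ guarantees $m\ge1$, up to constants.

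The key input is an exponential moment bound for the centred range on its natural scale. The block-level bound I need is
\begin{equation*}
\E\ek{\exp\rk{\tfrac{b\log^2 m}{m}\,\Centrange_m}}\le C(b)<\infty \quad \text{uniformly in } m .
\end{equation*}
For small $b$ this follows from the known exponential integrability behind \eqref{eq:CLT}: Le Gall's moment bounds, or Bass--Chen--Rosen for the range. Convergence of the moments then upgrades this to $\E\ek{\ldots}\to\E\ek{\ex^{-b(2\pi)^2\sqrt{\det\Gamma}\gamma_1}}$, which is where $\Lambda$ enters. Take $\lambda=b\log^2 m/m$ and insert $\Range_m=r_m+\Centrange_m$ in the Chebyshev bound. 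The exponent becomes
\begin{equation*}
-\lambda\rk{\theta_nr_n-kr_m}+k\log C(b)\, .
\end{equation*}
Now use the second-order term in \eqref{eq:exp_expans}: $kr_m\approx e_1 n/\log m+e_1n/\log^2m$, while $\theta_nr_n\approx e_1 n/\log m$ when $\log m=\log n/\theta_n$. It is therefore better to choose $\log m=\log n/\theta_n+a$ for a constant $a$. With this choice $\lambda(\theta_nr_n-kr_m)\asymp k\cdot(\text{const})$, and the resulting bound is $\exp(-ck)$ with $k\asymp n^{1-1/\theta_n}$.

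For the sharp constant when $\theta_n=o(\log n)$, I would optimize over $a$ and $b$. Let $u=\ex^{-a}$, so that $k= u\,n^{1-1/\theta_n}$ up to a $(1+o(1))$ factor. The exponent per unit $n^{1-1/\theta_n}$ becomes $u\bigl(\Lambda(b)-b(\text{linear in }a)\bigr)$, after normalizing $\gamma_1$ so that the constant $e_1$ cancels. Optimizing first over $a$ gives a stationarity condition, and optimizing over $b$ yields $\Lambda(b_0)=b_0(\Lambda'(b_0)-1)$ with value $-\ex^{-(\Lambda'(b_0)-1)}b_0=-\widetilde\Lambda$. Here $\theta_n=o(\log n)$ is used so that $m\to\infty$ and the block MGF converges to that of $\gamma_1$. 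The statement \eqref{use_tilLambda} is then the case $\theta_n=1+\delta$. In the regime $\theta_n\asymp\log n$ (with $m$ bounded) the crude bound remains valid: there $\Range_n\le n+1$ and $r_m\ll m$ for $m$ large, so one can take $\lambda$ of order one and still obtain $\exp(-ck)$.

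The main obstacle I expect is the uniform exponential moment of $\frac{\log^2m}{m}\Centrange_m$ for every $b$ in the relevant range, not only small $b$, together with convergence of these MGFs to $\Lambda$. For the upper tail of the range, which is the lower tail of $\gamma_1$, all exponential moments are finite. I would try to derive the bound itself by a dyadic decomposition $\Range_m=\Range'+\Range''-|\Range'\cap\Range''|$ iterated à la Le Gall. The intersection terms are nonnegative and can be dropped for the upper tail, and their expectations are handled via the expansion of $r_m$.
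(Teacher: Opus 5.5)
Your plan is essentially the paper's proof. The paper uses blocks of length $m=\floor{\ex^{\beta+1}n^{1/\theta_n}}$, which is your $a=\beta+1$, and subadditivity $\Range_n\le\sum_i\Range_{(i-1)m,im}$. The second-order term of $r_m$ then produces an excess of order $e_1n\theta_n^2/\log^2n$. The paper applies exponential Chebyshev to the rescaled centred block ranges with $\lambda\propto\log^2m/m$, and finally optimises over the block parameter and $\lambda$.

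The two differences from the paper are harmless.
\begin{enumerate}
\item For $\theta_n\ge\eps_0\log n$ the paper cites Hamana--Kesten. Your fixed-$m$ Chernoff bound is a valid self-contained replacement giving $\ex^{-cn}$: choose $m$ with $r_m/m$ small compared with $\eps_0e_1$, and use that the summands are bounded by $m+1$. Make the split at $\theta_n\ge\eps_0\log n$ rather than at ``$\theta_n\asymp\log n$'', since the expansions in the main step need $\log n/\theta_n$ large.
\item The ``main obstacle'' you single out is not proved in the paper either. The paper takes $\log\E\ek{\ex^{\lambda\widetilde{\Range}_m}}\le\Lambda(\lambda)(1+\delta_m)$, for each fixed $\lambda>0$, from Bass--Chen--Rosen, so you may cite it instead of redoing Le Gall's dyadic bootstrap. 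For the first claim one small $b$ suffices, provided $a$ is large enough that $be_1(a-1)>\log C(b)$.
\end{enumerate}

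The one step that does not hold as written is the final value of your optimisation. Put $L=\log n/\theta_n$, so $\log m=L+a$. With (1.2) as stated, $\theta_nr_n-kr_m=\frac{e_1n}{L^2}\rk{a-1+\theta_n^{-1}+o(1)}$. Normalise so that $e_1$ cancels, and drop the favourable $\theta_n^{-1}$ as the paper does. The exponent per unit $n^{1-\theta_n^{-1}}$ is then $\ex^{-a}\rk{\Lambda(b)-b(a-1)}$.
\begin{itemize}
\item Minimising in $a$ gives $a=\Lambda(b)/b+2$ and the value $-b\,\ex^{-\Lambda(b)/b-2}$.
\item Optimising in $b$ does give $\Lambda(b_0)=b_0(\Lambda'(b_0)-1)$, but the value is $-b_0\ex^{-(\Lambda'(b_0)+1)}=-\ex^{-2}\widetilde\Lambda$, not $-\widetilde\Lambda$.
\item Keeping the $\theta_n^{-1}$ only improves this to $b_0\ex^{-(\Lambda'(b_0)+1-\theta_n^{-1})}$. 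That is still short of $\widetilde\Lambda$ by at least a factor $\ex$, since $\theta_n\ge1$.
\end{itemize}

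The paper's proof has the same inconsistency. By its own definition $\widetilde\Lambda=\ex^{-(\beta_0+1)}\Lambda^*(\beta_0)$, with $\Lambda^*(\beta_0)=b_0$ and $\beta_0=\Lambda'(b_0)$, which equals $\ex^{-(\Lambda'(b_0)+1)}b_0$; its ``$\inf$'' should also be a ``$\sup$''. Both arguments also leave implicit the normalising constant between the limit of $\widetilde\Range_m$ and $-\gamma_1$.

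So your proof, like the paper's, establishes the order-of-magnitude bound with some $C_\up>0$. The particular constant $\widetilde\Lambda$ in the statement is not what this optimisation produces: you copied the statement's formula rather than the output of your own computation.
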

\begin{proof}
Without loss of generality, we can assume that $\theta_n<\e_0\log n$ for $\e_0\in (0,1)$ arbitrarily small but fixed, since for some $\psi\colon (0,1]\to (0,\infty)$
\begin{equation}
    \P\rk{\Range_n\ge \e_0 r_n\log n}\le \P\rk{\Range_n\ge \e_0 e_1 n/2}=\ex^{-\psi(\e_0/2)n\rk{1+o(1)}}\, ,
\end{equation}
by \cite[Theorem 1]{hamana2001large}. Define now $\alpha_n=\frac{1}{\theta_n}\le 1$ as well as
\begin{equation}
    m=\floor{\ex^{\beta+1}n^{\alpha_n}}\qquad\textnormal{and}\qquad M=\ceil{n/m}\, .
\end{equation}
Note that if $\theta_n=o(\log n)$, then $m$ diverges to infinity. Also, we define
\begin{equation}
    \Range_{a,b}=\mathop{\mathrm{card}}\{S_a,\ldots,S_{b-1}\}
    %\#\gk{x\in\Z^2\colon \exists k\in [a,b)\cap \N\textnormal{ with }S_k=x}\, ,
\end{equation}
to be the number of sites visited between time $a$ up to time~$b-1$ and, similarly,
let $\Centrange_{a,b} = \Range_{a,b} - \E\ek{\Range_{a,b}}=\Range_{a,b}-r_{b-a}$. Naturally, we have
\begin{equation}
    \Range_n\le \sum_{i=1}^M\Range_{(i-1)m,im}\, ,
\end{equation}
and hence
\begin{equation}\label{eq:rangePart}
    \P\rk{\Range_n\ge \theta_n r_n}\le \P\rk{\sum_{i=1}^M\Centrange_{(i-1)m,im}\ge \theta_n r_n-Mr_m}\, .
\end{equation}
By~\eqref{eq:exp_expans}, there exists $\delta_m\to 0$ (as $m\to\infty$), such that
\begin{equation}
    r_m\le \frac{e_1 m}{\log m}+\frac{e_1 m}{\log^2m}(1+\delta_m/2)\le \frac{e_1 \ex^{\beta+1}n^{\alpha_n}}{\alpha_n\log n}-\frac{e_1 \beta \ex^{\beta+1}n^{\alpha_n}}{\alpha_n^2\log^2n}(1+\delta_m)\, ,
\end{equation}
using the expansion
\begin{equation}
    \frac{1}{\log m}= \frac{1}{\alpha_n\log n+(\beta+1)}\rk{1+o(1)}=\frac{1}{\alpha_n\log n}-\frac{\beta+1}{\alpha_n^2\log^2 n}\rk{1+o(1)}\, .
\end{equation}
Therefore, we obtain
\begin{equation}
    Mr_m\le \theta_nr_n-\beta\frac{\theta_n^2r_n}{\log n}\rk{1+2\delta_m}\, .
\end{equation}
This implies by~\eqref{eq:rangePart} that
\begin{equation}
    \P\rk{\Range_n\ge \theta_n r_n}\le \P\rk{\sum_{i=1}^M\Centrange_{(i-1)m,im}\ge \beta \frac{\theta_n^2r_n}{\log n}\rk{1+2\delta_m} }\, .
\end{equation}
One can similarly verify that
\begin{equation}
    \beta \frac{\log^2 m}{m}\frac{\theta_n^2r_n}{\log n}\rk{1+2\delta_m} \ge \beta M\rk{1+\delta_m}\, .
\end{equation}
Then, the exponential Chebyshev's inequality implies that
\begin{align}
    \P\rk{\Range_n\ge \theta_n r_n}&\le \P\rk{\sum_{i=1}^M\frac{\log^2 m}{m}\Centrange_{(i-1)m,im}\ge \beta M(1+\delta_m)}\\ 
    &\le \exp\rk{-\lambda \beta M\rk{1+\delta_m}+M\log\E\ek{\ex^{\lambda \widetilde{\Range}_m}}}\, ,
\end{align}
where $\widetilde{\Range}_m$ has the law of
\begin{equation}
    \widetilde{\Range}_m\stackrel{\mathrm{(d)}}=\frac{\log^2 m}{m}\Centrange_{(i-1)m,im}\, .
\end{equation}
Note that we can furthermore choose $\delta_m$ such that for fixed $\lambda>0$ 
\begin{equation}
    \log\E\ek{\ex^{\lambda \widetilde{\Range}_m}}\le \Lambda(\lambda)(1+\delta_m)\, ,
\end{equation}
where $\Lambda$ is the logarithmic moment generating function of $-\gamma_1$, the renormalized self-intersection local time of the Brownian motion, see~\cite{bass2009moderate} shortly before~(3.3). Choose now $\lambda=\lambda_0$ maximizing $\ek{\beta\lambda-\Lambda(\l)}$, i.e., such that $\ek{\beta\lambda-\Lambda(\l)}$ becomes the large-deviation rate of $-\gamma_1$ at point~$\beta$. Write $\Lambda^*(\beta)=\lambda_0-\Lambda(\lambda_0)$. We then get that
\begin{equation}
      \P\rk{\Range_n\ge \theta_n r_n}\le \exp\rk{- \Lambda^*(\beta) M\rk{1+3\delta_m}}\, .
\end{equation}
Define now 
\begin{equation}
        \widetilde{\Lambda}=\inf_{\beta>0}\gk{\ex^{-(\beta+1)}\Lambda^*(\beta)}\, ,
\end{equation}
and note that this infimum is achieved at the point $\beta_0$ where
\begin{equation}
       \frac{\d}{\d \beta}\Lambda^*(\beta)=\Lambda^*(\beta)\, .
\end{equation}
Let us denote $b_0=\tfrac{\d}{\d \beta}\Lambda^*(\beta_0)$. By the definition on convex conjugate, we have $\L'(b_0)=\beta_0$ and $\Lambda^*(\beta_0)=b_0\beta_0-\Lambda'(b_0)$. This then yields that~$b_0$ solves the equation $\Lambda(b)=b\rk{\Lambda'(b)-1}$. This concludes the proof.
\end{proof}

\section{Lower bound}
\label{s_lower_proof}
We want to prove that there is $C>0$ such that for all~$n$ and all $\theta_n\geq 1$
such that $\theta_n r_n\leq n$ we have
\begin{equation}
 \label{LD_lower}
  \P(\Range_n\geq \theta_n r_n) \geq \exp(-C n^{1-\theta_n^{-1}}).
\end{equation}
First, assume that $\theta_n<\eps_0\log n$ for some small~$\eps_0$
(otherwise, \eqref{LD_lower} trivially holds because of the obvious strategy
``force the random walk to increase the 1st coordinate on every step'', which leads to $\P(\Range_n=n)\geq c_1^n$ for some $c_1>0$).
For a large constant~$\beta>0$ (to be chosen later) define
\[
 m=\exp\Big(\frac{\log n}{\theta_n} - \beta\Big)
  \quad \text{ and } \quad
  M = \frac{n}{m} = \exp\big(\beta+(1-\theta_n^{-1})\log n\big).
\]
In the following, for simplicity, we do the calculations as if~$m$ and~$M$
were integers; it is straightforward to check that the calculations in the general case are essentially the same (see the previous section for details). 
Note that, to prove~\eqref{LD_lower}, it suffices to show that
$\P(\Range_n\geq \theta_n r_n) \geq c_2^M$ for some positive~$c_2$.

For $\ell\in\N$, 
write $S_\ell\hk{1}$ for the first coordinate of the random walker (at time~$\ell$). For $k=1,\ldots,M$ define the (independent and same-probability) events
\begin{align*}
B_k &= \big\{S_{(k-1)m+j}\hk{1}-S_{(k-1)m}\hk{1}\in (-m^{1/2},3m^{1/2}) \text{ for }
 j=0,\ldots,m-1, \\
 & \qquad \qquad \qquad \qquad \qquad \qquad
 \text{ and }S_{km-1}\hk{1}-S_{(k-1)m}\hk{1}\in (2m^{1/2},3m^{1/2})\big\}.
\end{align*}
See on Figure~\ref{f_strategy2} an illustration of events $B_1,B_2,B_3$;
in particular, note that, when these events occur, the two bold pieces of the trajectory
(i.e., those corresponding to~$B_1$ and~$B_3$) cannot intersect.
\begin{figure}
\begin{center}
\includegraphics{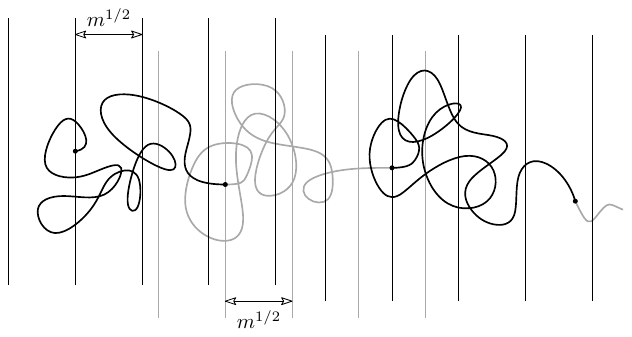}
\caption{Definition of the events $B_k$ and the strategy for the lower bound.}
\label{f_strategy2}
\end{center}
\end{figure}
Note that, by the Donsker's invariance principle (see e.g. \cite[Theorem 3.4.2]{lawler2010random}), there exists $h_0>0$ such that,
for all $m\geq 8$
\begin{equation}
\label{prob_Bk}
 \P(B_k)\geq h_0.
\end{equation}
Next, for $k=1,\ldots,M$ let us abbreviate $\Range^{(k)}:=\Range_{(k-1)m,km-1}$ and define the (again, independent and same-probability) events 
\[
E_k = \Big\{\Range^{(k)}\geq r_m \Big(1-\frac{\beta/2}{\log m}\Big)\Big\}.
\]
Note that~\eqref{eq:CLT} implies that, with some $f(\beta)\to 0$ as $\beta\to\infty$
\begin{equation}
\label{ocenka_Ek}
 \P(E_k) \geq 1-f(\beta).
\end{equation}
Then, define 
\[
\I^{(k,k+1)} = \mathop{\mathrm{card}}\big(\{S_{(k-1)m},\ldots,S_{km-1}\}
\cap \{S_{km},\ldots,S_{(k+1)m-1}\}\big)
\]
to be the sizes of the intersections of ``neighbouring'' ranges, and let
\[
I_{k,k+1} = \Big\{\I^{(k,k+1)}\leq r_m \frac{\beta/6}{\log m}\Big\}.
\]
We note that $\I^{(k,k+1)}=\Range^{(k)}+\Range^{(k+1)}-\Range_{(k-1)m,(k+1)m-1}$, so
\[
\E \ek{\I^{(k,k+1)}} = 2r_m-r_{2m} = r_m\frac{2\log 2}{\log m}(1+o(1)).
\]
Therefore, Chebyshev's inequality implies that
\begin{equation}
\label{ocenka_Ik_k+1}
 \P(I_{k,k+1}) \geq 1 - \frac{2\log 2}{\beta/6}(1+o(1)).
\end{equation}

Now, on the event $B:=B_1\cap\ldots \cap B_M$ only neighbouring ranges can intersect (again, see Figure~\ref{f_strategy2}), so, on~$B$ we have
\begin{equation}
\label{sum_ranges}
\Range_n = \Range^{(1)} + \cdots + \Range^{(M)} - (\I^{1,2}+ \cdots + \I^{M-1,M}).
\end{equation}
Then, it holds that (note that $\log n = \theta_n(\beta+\log m)$)
\[
\frac{r_n}{r_m} = M \frac{\log m}{\theta_n(\beta+\log m)} \Big(1 - \frac{1}{\log n} + \frac{1}{\log m}(1+o(1))\Big),
\]
so 
\begin{equation}
\label{theta_n_r_n}
\theta_n r_n \leq M r_m\Big(1 - \frac{\beta}{\log m} (1+o(1))\Big).
\end{equation}
Denote $E:=E_1\cap\ldots \cap E_M$ and $I:=I_{1,2}\cap\ldots\cap I_{M-1,M}$. 
Then, on $B\cap E\cap I$ we have, due to~\eqref{sum_ranges}
\[
\Range_n \geq M r_m \Big(1-\frac{\beta/2}{\log m}-\frac{\beta/6}{\log m}\Big)
  = M r_m \Big(1-\frac{2\beta/3}{\log m}\Big),
\]
and so, due to~\eqref{theta_n_r_n}, on $B\cap E\cap I$ 
the event $\{\Range_n\geq \theta_n r_n\}$ occurs (at least if~$m$ is large enough).

We are thus left with the task of finding a lower bound for
$\P(B\cap E\cap I) = \P(B)\P(E\cap I \mid B)$.
Denote $\eta_k=\ind{E_k\cap E_{k+1}\cap I_{k,k+1}}$, and let $\P^*(\cdot) = \P(\,\cdot\mid B)$.
Note that $\{\eta_k = 1\}$ is independent of $(B_\ell, \ell\neq k, k+1)$,
and so~\eqref{prob_Bk}, \eqref{ocenka_Ek} and~\eqref{ocenka_Ik_k+1} imply
that
\begin{equation}
\label{eta=1}
 \P^*(\eta_k=1) = \P(E_k\cap E_{k+1}\cap I_{k,k+1} \mid B_k\cap B_{k+1}) 
  \geq 1 - \frac{2f(\beta)+ \frac{2\log 2}{\beta/6}(1+o(1))}{h_0^2} > \frac{3}{4}
\end{equation}
if $\beta$ is large enough.
Now, $(\eta_1,\ldots,\eta_M)$ is a $1$-dependent random sequence under~$\P^*$
(that is, $(\eta_k, k\in A_1)$ and $(\eta_k, k\in A_2)$ are $\P^*$-independent when
no element of~$A_1$ is a neighbour of an element of~$A_2$),
and so Theorem~0.0~(i) of~\cite{LSS97} implies that 
\[
\P(E\cap I \mid B) = \P^*[\eta_1=\ldots=\eta_M=1] \geq (1/4)^M.
\]
Since we also have $\P(B)\geq h_0^M$, this implies~\eqref{LD_lower} and thus concludes the proof of Theorem~\ref{t_main}.

\section*{Acknowledgments}
The authors would like to thank the organizers of the CIRM research school 3451 \textit{Marches al\'eatoires: applications et interactions} during which they had the opportunity to discuss this problem. SP was partially supported by
CMUP, member of LASI, which is financed by national funds
through FCT (Funda\c{c}\~ao para a Ci\^encia e a Tecnologia, I.P.) 
under the project with reference UID/00144/2025.

\bibliographystyle{alpha}
\bibliography{references.bib}

@book{chen2010random,
  title={Random walk intersections: large deviations and related topics},
  author={Chen, X.},
  number={157},
  year={2010},
  publisher={American Mathematical Soc.}
}

@inproceedings{dvoretzky1951some,
  title={Some problems on random walk in space},
  author={Dvoretzky, A. and Erdős, P.},
  booktitle={Proc. Second Berkeley Symp. Math. Statist. Probab},
  pages={353--367},
  year={1951}
}

@article{le1986proprietes,
  title={Propri{\'e}t{\'e}s d'intersection des marches al{\'e}atoires: I. {C}onvergence vers le temps local d'intersection},
  author={Le Gall, J-F.},
  journal={Communications in mathematical physics},
  volume={104},
  number={3},
  pages={471--507},
  year={1986},
  publisher={Springer}
}

@article{liu2021large,
  title={Large deviations of the range of the planar random walk on the scale of the mean},
  author={Liu, J. and Vogel, Q.},
  journal={Journal of Theoretical Probability},
  volume={34},
  number={4},
  pages={2315--2345},
  year={2021},
  publisher={Springer}
}

@article{donsker1979number,
  title={On the number of distinct sites visited by a random walk},
  author={Donsker, M. and Varadhan, S.},
  journal={Communications on Pure and Applied Mathematics},
  volume={32},
  number={6},
  pages={721--747},
  year={1979},
  publisher={Wiley Online Library}
}

@book{lawler2010random,
  title={Random walk: a modern introduction},
  author={Lawler, G. and Limic, V.},
  volume={123},
  year={2010},
  publisher={Cambridge University Press}
}

@article{hamana2001large,
  title={A large-deviation result for the range of random walk and for the Wiener sausage},
  author={Hamana, Y. and Kesten, H.},
  journal={Probability theory and related fields},
  volume={120},
  number={2},
  pages={183--208},
  year={2001},
  publisher={Springer}
}

@article{le1991range,
  title={The range of stable random walks},
  author={Le Gall, J-F. and Rosen, J.},
  journal={The Annals of Probability},
  pages={650--705},
  year={1991},
  publisher={JSTOR}
}

@book{bass2009moderate,
  title={Moderate deviations for the range of planar random walks},
  author={Bass, R. and Chen, X. and Rosen, J.},
  year={2009},
  publisher={American Mathematical Soc.}
}

@article{LSS97,
 ISSN = {00911798, 2168894X},
 author = {Liggett, T.M. and Schonmann, R.H. and Stacey, A.M.},
 journal = {The Annals of Probability},
 number = {1},
 pages = {71--95},
 publisher = {Institute of Mathematical Statistics},
 title = {Domination by product measures},
 urldate = {2026-02-04},
 volume = {25},
 year = {1997}
}
\end{document}